\newtheorem{theorem}{Theorem}[section]
\newtheorem{lemma}[theorem]{Lemma}
\newtheorem{proposition}[theorem]{Proposition}
\theoremstyle{definition}
\newtheorem{definition}[theorem]{Definition}
\newtheorem{remark}[theorem]{Remark}
\newtheorem{example}[theorem]{Example}
\theoremstyle{remark}
\renewcommand{\theclaim}{\textup{\theclaim}}
\numberwithin{equation}{section}
\def\openone%{\hbox{\upshape \small1\kern-3.3pt\normalsize1}}
\newbox\ipbox
\newcommand{\ip}[2]{\left\langle #1\, , \,#2\right\rangle}
\newcommand{\diracb}[1]{\left\langle #1\mathrel{\mathchoice

{\setbox\ipbox=\hbox{$\displaystyle \left\langle\mathstrut #1\right.$}

\vrule height\ht\ipbox width0.25pt depth\dp\ipbox}

{\setbox\ipbox=\hbox{$\textstyle \left\langle\mathstrut #1\right.$}

\vrule height\ht\ipbox width0.25pt depth\dp\ipbox}

{\setbox\ipbox=\hbox{$\scriptstyle \left\langle\mathstrut #1\right.$}

\vrule height\ht\ipbox width0.25pt depth\dp\ipbox}

{\setbox\ipbox=\hbox{$\scriptscriptstyle \left\langle\mathstrut #1\right.$}

\vrule height\ht\ipbox width0.25pt depth\dp\ipbox}

}\right. }
\newcommand{\dirack}[1]{\left. \mathrel{\mathchoice

{\setbox\ipbox=\hbox{$\displaystyle \left.\mathstrut #1\right\rangle$}

\vrule height\ht\ipbox width0.25pt depth\dp\ipbox}

{\setbox\ipbox=\hbox{$\textstyle \left.\mathstrut #1\right\rangle$}

\vrule height\ht\ipbox width0.25pt depth\dp\ipbox}

{\setbox\ipbox=\hbox{$\scriptstyle \left.\mathstrut #1\right\rangle$}

\vrule height\ht\ipbox width0.25pt depth\dp\ipbox}

{\setbox\ipbox=\hbox{$\scriptscriptstyle \left.\mathstrut #1\right\rangle$}

\vrule height\ht\ipbox width0.25pt depth\dp\ipbox}

} #1\right\rangle}
\newcommand{\cj}[1]{\overline{#1}}
\newcommand{\bz}{\mathbb{Z}}
\newcommand{\br}{\mathbb{R}}
\newcommand{\bn}{\mathbb{N}}
\def\blfootnote{\xdef\@thefnmark{}\@footnotetext}
\newcommand{\cD}{\mathcal{D}}
\newcommand{\Rs}{{R^*}}
\def\H{\mathcal{H}}
\def\-{^{-1}}
\def\D{\mathcal{D}}
\begin{document}

\title[On the Beurling dimension of exponential frames]{On
the Beurling dimension of exponential frames}
\author{Dorin Ervin Dutkay}
\blfootnote{}
\address{[Dorin Ervin Dutkay] University of Central Florida\\
    Department of Mathematics\\
    4000 Central Florida Blvd.\\
    P.O. Box 161364\\
    Orlando, FL 32816-1364\\
U.S.A.\\} \email{ddutkay@mail.ucf.edu}
\author{Deguang Han}
\address{[Deguang Han] University of Central Florida\\
    Department of Mathematics\\
    4000 Central Florida Blvd.\\
    P.O. Box 161364\\
    Orlando, FL 32816-1364\\
U.S.A.\\} \email{dhan@pegasus.cc.ucf.edu}

\author{Qiyu Sun}
\address{[Qiyu Sun]University of Central Florida\\
    Department of Mathematics\\
    4000 Central Florida Blvd.\\
    P.O. Box 161364\\
    Orlando, FL 32816-1364\\
U.S.A.\\} \email{qsun@mail.ucf.edu}

\author{Eric Weber}
\address{[Eric Weber]Department of Mathematics\\
396 Carver Hall\\
Iowa State University\\
Ames, IA 50011\\
U.S.A.\\} \email{esweber@iastate.edu}

\thanks{}
\subjclass[2000]{28A80,28A78, 42B05} \keywords{fractal, iterated function system, frame, Hausdorff dimension, Beurling dimension}

\begin{abstract}

We study Fourier
frames of exponentials on fractal measures associated with a class of affine
iterated function systems. We prove that, under a mild technical condition,
the Beurling dimension of a Fourier frame coincides with the Hausdorff
dimension of the fractal.
\end{abstract}

\maketitle

%\begin{flushright}
%{\it
%La Nature est un temple o\`u de vivants piliers\\
%Laissent parfois sortir de confuses paroles;\\
%L'homme y passe \`a travers des for\^ets de symboles\\
%Qui l'observent avec des regards familiers.}\\
%\smallskip
%Charles Baudelaire, {\it Correspondances}
%
%\end{flushright}

\tableofcontents

\section{Introduction}
%The scientific and more direct restatement of Baudelaire's {\it Correspondances} came in the form of a question perfectly formulated by Mark Kac in the title of an American Mathematical Monthly paper \cite{MR0201237}: ``Can one hear the shape of a drum?''
%Baudelaire's poem and Kac's question imply a close connection between the geometry of the nature and its sounds. To describe the ``sounds'', Mark Kac used the spectrum of the Laplacian operator associated to the drum, which is known to be at the core of the equation that models the vibrations of the drum. To be more precise, Mark Kac was interested in what can one say about the geometry of the drum if the spectrum of the Laplacian is known. This question has received a lot of attention and has been formulated in various areas such as fractals, group theory, combinatorics \cite{MR834484,GWW,MR1862642,MR1081333,MR1397461}.
%
%
%In this paper we will be interested in a similar question: can one hear the shape of a fractal? However, on the musical side of the correspondence, we will not use the Lapalacian operator but Fourier frames of exponential functions. We will show that in this respect one can hear the Hausdorff dimension of a fractal. We begin with some definitions.

A family of vectors $(e_i)_{i\in I}$ in a Hilbert space $\H$ is called a {\it frame} if there exist $m,M>0$ such that
$$m\|f\|^2\leq \sum_{i\in I}|\ip{f}{e_i}|^2\leq M\|f\|^2.$$
The constants $m$ and $M$ are called {\it lower and upper bounds} of the frame. If only the upper bound holds, then $(e_i)_{i\in I}$ is called a
{\it Bessel sequence}, and the upper bound is called the {\it Bessel bound}.

Frames provide robust, basis-like (but generally nonunique) representations of vectors in a Hilbert space. The potential redundancy of frames
often allows them to be more easily constructible than bases, and to possess better properties than those that are achievable using bases. For example,
redundant frames offer more resilience to the effects of noise or to erasures of frame elements than bases. Frames were introduced by Duffin
and Schaeffer \cite{DuSc52} in the context of nonharmonic Fourier series, and today they have applications in a wide range of areas. Following
Duffin and Schaeffer a {\it Fourier frame} or {\it frame of exponentials} is a frame of the form $\{e^{2\pi i\lambda\cdot x}\}_{\lambda\in \Lambda}$ for the
Hilbert space $L^{2}[0, 1]$. Fourier frames are also closely connected with sampling sequences or complete interpolating sequences \cite{OSANN}.

The main result of Duffin and Schaeffer is a sufficient density condition for $\{e^{2\pi i\lambda\cdot x}\}_{\lambda\in \Lambda}$ to be a frame. Landau \cite{MR0222554}, Jaffard
\cite{Jaffard} and Seip \cite{Seip2} ``almost"  characterize the frame properties of $\{e^{2\pi i\lambda\cdot x}\}_{\Lambda\in \Lambda}$ in
terms of lower Beurling density:
$$
\mathcal D^-(\Lambda):= \liminf_{h\rightarrow\infty}\inf_{x\in \br}\frac{\#(\Lambda\cap[x-h,x+h])}{2h}.
$$

\begin{theorem}\label{th1.1} For $\{e^{2\pi i\lambda\cdot x}\}_{\Lambda\in \Lambda}$ to be a frame for $L^2[0,1]$, it is necessary that $\Lambda$ is relatively separated
and $\mathcal D^-(\Lambda)\geq 1$,  and it is sufficient that $\Lambda$ is relatively separated and $\mathcal D^-(\Lambda)> 1$.
\end{theorem}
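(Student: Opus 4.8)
The plan is to recast the statement as a sampling theorem for a Paley--Wiener space and then to invoke the two deep results that make it sharp. Let $PW:=\{g\in\ltwor:\ \widehat g\ \text{is supported in}\ [0,1]\}$, a reproducing kernel Hilbert space with $\operatorname{sinc}$-type kernels $k_\lambda$, $k_\lambda(\lambda)=1$. For $g\in PW$ and $\phi:=\widehat g\in L^2[0,1]$ one has $g(\lambda)=\ip{\phi}{e^{-2\pi i\lambda x}}_{L^2[0,1]}$, and $g\mapsto\phi$ is a unitary of $PW$ onto $L^2[0,1]$ carrying $k_\lambda$ to $e^{-2\pi i\lambda x}$; so, after the harmless reflection $\Lambda\mapsto-\Lambda$, the family $\{e^{2\pi i\lambda x}\}_{\lambda\in\Lambda}$ is a frame for $L^2[0,1]$ with bounds $m,M$ if and only if
\[
m\norm{g}^2\le\sum_{\lambda\in\Lambda}|g(\lambda)|^2\le M\norm{g}^2\qquad\text{for all }g\in PW,
\]
i.e.\ $\Lambda$ is a set of sampling for $PW$. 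In this form the upper (Bessel) bound is equivalent to $\Lambda$ being relatively separated: if $\Lambda$ is a finite union of separated sets, the Plancherel--P\'olya inequality gives $\sum_{\lambda\in\Lambda}|g(\lambda)|^2\le C\norm{g}^2$; conversely, inserting the normalized kernels $k_{\lambda_0}/\norm{k_{\lambda_0}}$ into the Bessel inequality forces a uniform bound on $\#(\Lambda\cap[t,t+1])$. This disposes of both ``relatively separated'' clauses and reduces the theorem to the two density statements.

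For the necessity of $\mathcal D^-(\Lambda)\ge1$ I would run Landau's density argument, in its streamlined Ramanathan--Steger form. Assume $\Lambda$ is relatively separated and $\{k_\lambda\}_{\lambda\in\Lambda}$ is a frame for $PW$, with dual frame $\{\widetilde k_\lambda\}$ and reconstruction $g=\sum_{\lambda}\ip{g}{\widetilde k_\lambda}k_\lambda$. One first establishes a \emph{homogeneous approximation property}: there is $R\mapsto\varepsilon(R)$ with $\varepsilon(R)\to0$ such that whenever $g\in PW$ is concentrated (in $L^2$-norm) on an interval $I$, the truncated expansion $\sum_{\lambda\in\Lambda\cap(I+[-R,R])}\ip{g}{\widetilde k_\lambda}k_\lambda$ reproduces $g$ with relative error $\le\varepsilon(R)$; this uses relative separation and the fact that the kernels $k_\lambda$ are mutual translates. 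Next fix an interval $J_T$ of length $T$: by the Slepian--Landau--Pollak prolate analysis and the Landau--Widom eigenvalue asymptotics for the associated time-and-band-limiting operator, there is a subspace $V_T\subset PW$ with $\dim V_T\ge T-o(T)$ consisting of functions concentrated on $J_T$. Pick $R$ with $\varepsilon(R)<1$; then the map $g\mapsto\bigl(\ip{g}{\widetilde k_\lambda}\bigr)_{\lambda\in\Lambda\cap(J_T+[-R,R])}$ is injective on $V_T$, since composing it with partial synthesis is within $\varepsilon(R)$ of the identity on $V_T$. Hence $\#(\Lambda\cap(J_T+[-R,R]))\ge\dim V_T\ge T-o(T)$, and as $\Lambda$ is relatively separated and $R$ is fixed this gives $\#(\Lambda\cap J_T)\ge T-o(T)$; dividing by $T$ yields $\mathcal D^-(\Lambda)\ge1$.

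For the sufficiency of $\mathcal D^-(\Lambda)>1$ I would reduce to Beurling's covering theorem: if $\Lambda$ is relatively separated and there is $\rho<\tfrac14$ such that every interval of length $\rho$ meets $\Lambda$, then $\{e^{2\pi i\lambda x}\}_{\lambda\in\Lambda}$ is a frame for $L^2$ of a unit interval. That gap condition is far from the sharp density hypothesis, and the discrepancy is removed by the Jaffard--Seip refinement: from $\mathcal D^-(\Lambda)>1$ one extracts, after a dilation and passing to a subset, a relatively separated sequence that is a uniformly bounded (Kadec-type) perturbation of a sequence obeying a Beurling-type gap condition; Beurling's theorem for that sequence together with frame stability under small perturbations (the Paley--Wiener/Kadec $\tfrac14$-theorem, or Avdonin's ``$\tfrac14$ in the mean'') produces a frame, and any relatively separated set containing a frame of exponentials is again one. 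Hence $\mathcal D^-(\Lambda)>1$ suffices.

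The two sharp results are where I expect the real difficulty. For necessity it is establishing the homogeneous approximation property with a genuinely quantitative $\varepsilon(R)\to0$ together with the prolate eigenvalue count to precision $o(T)$ (Landau--Widom); for sufficiency it is replacing Beurling's crude constant $\tfrac14$ by the optimal density $1$, which is exactly the theorem of Jaffard and Seip. Everything else --- the Fourier-transform dictionary, Plancherel--P\'olya, and the dimension bookkeeping --- is routine by comparison, and in practice one simply cites \cite{MR0222554,Jaffard,Seip2}.
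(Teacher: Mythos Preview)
The paper does not prove Theorem~\ref{th1.1} at all: it is stated in the introduction purely as background, with attribution to Landau~\cite{MR0222554}, Jaffard~\cite{Jaffard}, and Seip~\cite{Seip2}, and no argument is given. So there is no ``paper's own proof'' to compare against; the theorem functions only as motivation for the fractal results that follow.

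Your sketch is a faithful outline of exactly the classical route in those cited references: the Fourier--sampling dictionary with the Paley--Wiener space, Plancherel--P\'olya for the Bessel/relative-separation equivalence, Landau's eigenvalue-counting argument (in its Ramanathan--Steger homogeneous-approximation form) for the necessity $\mathcal D^-(\Lambda)\ge 1$, and the Beurling--Jaffard--Seip perturbation machinery for the sufficiency $\mathcal D^-(\Lambda)>1$. You correctly identify the two hard ingredients (the $o(T)$ prolate count and the sharpening of Beurling's $\tfrac14$ to density $1$) and are candid that in practice one cites \cite{MR0222554,Jaffard,Seip2} --- which is precisely what the paper does. There is nothing to correct here; just be aware that you have written a literature summary rather than a comparison, because the paper offers no proof of its own.
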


The property of relative separation is equivalent to the condition that the upper density $$\mathcal D^{+}(\Lambda):=
\limsup_{h\rightarrow\infty}\sup_{x\in R}\frac{\#(\Lambda\cap[x-h,x+h])}{2h}$$ is finite.

For the critical case when $\mathcal D^-(\Lambda)= 1$, the complete characterization  was beautifully formulated by Joaquim Ortega-Cerd\`{a} and Kristian
Seip in \cite{OSANN} where the key step is to connect the problem with  de Branges' theory of Hilbert spaces of entire functions, and this
new characterization lead to applications in a classical inequality of H. Landau and an approximation problem for subharmonic functions.

In recent years there has been a wide range of interests in expanding the classical Fourier analysis to fractal or more general probability measures \cite{MR2509326,MR2435649,MR1744572,MR1655831,MR2338387,MR2200934,MR2297038,MR1785282,MR2279556,MR2443273}. One of the central themes of this area of research involves constructive and computational bases in
$L^2(\mu)$-Hilbert spaces, where $\mu$ is a measure which is determined by some self-similarity property. Hence these include classical Fourier
bases, as well as wavelet and frame constructions.

\begin{definition}\label{deff1}
Let $\mu$ be a finite Borel measure on $\br^d$. We say that a set $\Lambda$ in $\br^d$ is a {\it frame spectrum} for $\mu$, with {\it
frame bounds} $m,M>0$, if the set $E(\Lambda):=\{e_\lambda : \lambda\in\Lambda\}$ is a frame for $L^2(\mu)$ with frame
bounds $m$ and $M$. We say that a set $\Lambda$ in $\br^d$ is a {\it Bessel spectrum} for $\mu$ with {\it Bessel bound} $M>0$, if the set $E(\Lambda)$ is a Bessel sequence for $L^2(\mu)$ with Bessel bound $M$. We call $\Lambda$ simply a {\it spectrum} for $\mu$ if $E(\Lambda)$ is an orthonormal basis. If $\mu$ has a spectrum then we say that $\mu$ is a {\it spectral measure}. We will also call $E(\Lambda)$ a {\it Fourier frame/Bessel sequence/orthonormal basis}.
\end{definition}

In \cite{MR1655831} Jorgensen and Pedersen proved the surprising result that for certain Cantor measures it is possible to construct orthonormal Fourier bases. Their motivating example consisted of the Cantor set defined by dividing the unit interval into four equal intervals, keeping only the first and the third, and repeating the process {\it ad inf}. On the resulting Cantor set one considers the appropriate Hausdorff measure of dimension $\ln2/\ln4$, or equivalently the Hutchinson measure as in Definition \ref{deff2} below. They proved that the spectrum of this measure is
$$\Lambda:=\left\{\sum_{k=0}^n 4^k l_k : l_k\in\{0,1\}\right\}.$$

In the same paper, they proved that for the middle-third Cantor set it is impossible to construct more than two mutually orthogonal exponentials, thus no hope for a spectrum. However, it is still unknown whether a frame spectrum exists for the middle-third Cantor measure.

The Jorgensen-Pedersen example naturally generated  questions about the existence (or characterization) of frames of exponentials for Borel probability measures that are not spectral measures. The main focus of this paper are the fractal measures induced by affine iterated function systems. We will be interested in the corresponces between the geometry of the fractal measure and the properties of its Bessel/frame spectra.

\begin{definition}\label{deff1.5}
Let $R$ be a $d\times d$ expanding real matrix, i.e., all eigenvalues $\lambda$ satisfy $|\lambda|>1$. Let $B$ be a finite subset of $\br^d$ of
cardinality $\#B=:N$. For convenience, we can assume $0\in B$. We consider the following {\it affine iterated function system}:
\begin{equation}
\tau_b(x)=R^{-1}(x+b),\quad(x\in\br^d,b\in B).
    \label{eq1_1}
\end{equation}
We denote by $R^*$ the transpose of $R$.
\end{definition}

Since $R$ is expanding, the maps $\tau_b$ are contractions (in an appropriate metric equivalent to the Euclidean one), and therefore Hutchinson's theorem can be applied:
\begin{theorem}\label{th1_1}\cite{Hut81} There exists a unique compact set $X=X_B\subset\br^d$ such that
\begin{equation}
    X=\bigcup_{b\in B}\tau_b(X).
    \label{eq1_2}
\end{equation}
    Moreover
\begin{equation}
    X_B=\left\{\sum_{k=1}^\infty R^{-k}b_k :  b_k\in B\mbox{ for all }k\in\bn\right\}.
    \label{eq1_3}
\end{equation}
There exists a unique Borel probability measure $\mu=\mu_B$ on $\br^d$ such that
\begin{equation}
    \int f\,d\mu=\frac1N\sum_{b\in B}\int f\circ\tau_b\,d\mu,
    \label{eq1_4}
\end{equation}
for all compactly supported continuous functions $f$ on $\br^d$.

Moreover, the measure $\mu_B$ is supported on the set $X_B$.
\end{theorem}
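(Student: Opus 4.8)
The plan is to invoke the Banach contraction mapping principle twice: once in the space of nonempty compact subsets of $\br^d$ with the Hausdorff metric, to produce $X_B$ and establish \eqref{eq1_2}--\eqref{eq1_3}, and once in a complete metric space of Borel probability measures, to produce $\mu_B$ and \eqref{eq1_4}. Since $R$ is expanding, the spectral radius of $R^{-1}$ is $<1$, so there is a norm $\|\cdot\|_0$ on $\br^d$ equivalent to the Euclidean norm and a constant $c\in(0,1)$ with $\|R^{-1}x\|_0\le c\|x\|_0$ for all $x$ (for instance $\|x\|_0=\sum_{j\ge0}\rho^{j}\|R^{-j}x\|$ with $\rho\in(1,\min_i|\lambda_i(R)|)$ and $c=\rho^{-1}$). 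In the metric $d_0(x,y)=\|x-y\|_0$ every $\tau_b$ is a contraction with ratio $c$, since $\tau_b(x)-\tau_b(y)=R^{-1}(x-y)$.

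Let $(\mathcal K,d)$ be the nonempty compact subsets of $\br^d$ with the Hausdorff metric induced by $d_0$; this is a complete metric space, and the Hutchinson operator $F(A)=\bigcup_{b\in B}\tau_b(A)$ satisfies $d(F(A),F(A'))\le c\,d(A,A')$ (using $d(\tau_b(A),\tau_b(A'))\le c\,d(A,A')$ together with $d(\bigcup_iA_i,\bigcup_iA_i')\le\max_i d(A_i,A_i')$). Hence $F$ has a unique fixed point $X=X_B$, which is \eqref{eq1_2}. For \eqref{eq1_3}, I would introduce the coding map $\pi\colon B^{\bn}\to\br^d$, $\pi((b_k)_k)=\sum_{k=1}^\infty R^{-k}b_k$; the series converges absolutely since $\|R^{-k}b_k\|_0\le c^k\max_{b\in B}\|b\|_0$, and $\pi$ is continuous on the compact product space $B^{\bn}$, so $\pi(B^{\bn})$ is nonempty and compact. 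The identity $\pi((b_k)_{k\ge1})=\tau_{b_1}\big(\pi((b_{k+1})_{k\ge1})\big)$ gives $F(\pi(B^{\bn}))=\pi(B^{\bn})$, so by uniqueness $X_B=\pi(B^{\bn})$.

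For the measure, I would work in $P(X_B)$, the Borel probability measures supported on the compact set $X_B$, with the Kantorovich--Hutchinson metric $d_H(\nu,\nu')=\sup\{\,|\int f\,d\nu-\int f\,d\nu'|:\mathrm{Lip}_{\|\cdot\|_0}(f)\le1\,\}$, which is complete (it metrizes weak-$*$ convergence on the compact $X_B$). Since $\tau_b(X_B)\subset X_B$, the operator $M(\nu)=\frac1N\sum_{b\in B}\nu\circ\tau_b^{-1}$ maps $P(X_B)$ into itself, and testing against $1$-Lipschitz $f$ while using that $f\circ\tau_b$ is $c$-Lipschitz yields $d_H(M(\nu),M(\nu'))\le c\,d_H(\nu,\nu')$. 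So $M$ has a unique fixed point $\mu=\mu_B\in P(X_B)$; unwinding $M(\mu)=\mu$ against a test function is exactly \eqref{eq1_4}, and $\mu$ is supported on $X_B$ by construction. For uniqueness among \emph{all} Borel probability measures on $\br^d$, iterating \eqref{eq1_4} gives, for compactly supported continuous $f$, $\int f\,d\nu=\frac1{N^n}\sum_{\omega\in B^n}\int f\big(\sum_{k=1}^nR^{-k}\omega_k+R^{-n}x\big)\,d\nu(x)$, and since $\|R^{-n}x\|_0\le c^n\|x\|_0\to0$, dominated convergence forces $\int f\,d\nu=\lim_n\frac1{N^n}\sum_{\omega\in B^n}f\big(\sum_{k=1}^nR^{-k}\omega_k\big)$, which is independent of $\nu$ and also shows $\mathrm{supp}\,\nu\subseteq X_B$. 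The main obstacles are precisely the two ingredients making these fixed-point arguments run: the adapted norm $\|\cdot\|_0$ (in the Euclidean norm an expanding $R$ need not satisfy $\|R^{-1}\|<1$), and the completeness of $(\mathcal K,d)$ and $(P(X_B),d_H)$ together with the two contraction estimates; the convergence of the coding series, the invariance identities for $\pi$ and $M$, and recognizing the fixed-point equations as \eqref{eq1_2} and \eqref{eq1_4} are routine.
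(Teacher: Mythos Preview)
Your proof is correct and follows the standard Hutchinson argument. Note, however, that the paper does not actually prove this theorem: it is stated with the citation \cite{Hut81} and used as background, with no proof supplied. So there is no ``paper's own proof'' to compare against; what you have written is essentially a condensed version of Hutchinson's original argument (adapted norm so that the $\tau_b$ are genuine contractions, Banach fixed point in $(\mathcal K,d)$ for $X_B$, coding map for the explicit description \eqref{eq1_3}, and Banach fixed point in the Kantorovich--Hutchinson metric for $\mu_B$).

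One small remark on the last step: your uniqueness argument for $\nu$ ranging over \emph{all} Borel probability measures is slightly compressed. The limit $\lim_n N^{-n}\sum_{\omega\in B^n} f\big(\sum_{k=1}^n R^{-k}\omega_k\big)$ exists because it is $\int f\,d(M^n\delta_0)$ and $M^n\delta_0\to\mu_B$; the claim that the $R^{-n}x$ term washes out uses uniform continuity of $f$ (for fixed $x$, $|f(\cdot+R^{-n}x)-f(\cdot)|\to0$ uniformly on compacta) together with dominated convergence against the bound $\|f\|_\infty$. This is routine, but worth one extra line.
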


It is the measure $\mu_B$ that we will focus on here, and we will give necessary and/or sufficient conditions for a set $\Lambda$ to be a Bessel/frame spectrum for $L^2(\mu_B)$.

\begin{definition}\label{deff2}
The set $X_B$ in \eqref{eq1_2},\eqref{eq1_3} is called {\it the attractor} of the iterated function system (IFS) $(\tau_b)_{b\in B}$. The
measure $\mu_B$ in \eqref{eq1_4} is called {\it the invariant measure} of the IFS $(\tau_b)_{b\in B}$. We will also say that $\mu_B$ is an {\it affine IFS measure}.

If $\mu_B(\tau_b(X_B)\cap \tau_{b'}(X_B))=0$ for all $b\neq b'$, then we say that the IFS $(\tau_b)_{b\in B}$ (or the measure $\mu_B$) {\it has
no overlap}.
\end{definition}

To establish a connection between the Hausdorff dimension of the fractal and the properties of its frame or Bessel spectra, we will use the notion of Beurling dimension, or more precisely the upper Beurling dimension (Definition \ref{deff3}). This notion was introduced in \cite{CKS08} for the study of irregular Gabor frames.

The paper is organized as follows: the main result of the paper is Theorem \ref{thf4}. It shows that for affine IFS measures with no overlap, any frame spectrum satisfying some mild assumptions will have the Beurling dimension equal to the Hausdorff dimension of the fractal. The formula for the Hausdorff dimension of the fractal (see \cite{Fal}) is $\log_\rho N$ where $N$ is the number of contractions in the affine iterated function systems and $\rho$ is the contraction constant (see Remark \ref{rem3.4} for more details).

 In section \ref{gen} we present some preliminaries. Proposition \ref{pri6} shows that oversampling by a factor of $R$ preserves the frame property and increases the frame bounds by a factor of $N$. Proposition \ref{prst} shows that the Bessel spectra are stable under uniformly bounded perturbation, and that frame spectra are stable under small uniformly bounded deformations.

\section{Preliminaries}\label{gen}
We begin with an oversampling result. It can be formulated briefly as: oversampling by $R$ implies multiplying the bounds by $N$.

\begin{proposition}\label{pri6}
Suppose the IFS $(\tau_b)_{b\in B}$ has no overlap and let $\mu=\mu_B$ be its invariant measure. Suppose $\Lambda$ is a frame (Bessel) spectrum for
$\mu$. Then
\begin{enumerate}
\item
$\Rs^{-1}\Lambda$ is a frame (Bessel) spectrum for $\mu$, and the frame bounds are multiplied by $N$.
\item
For every $n\geq0$ the following inequality holds
\begin{equation}\label{eqf1.1}
m N^n\leq \sum_{\lambda\in\Lambda}\left|\widehat\mu_B\left(x-{R^*}^{-n}\lambda\right)\right|^2\leq MN^n,\quad(x\in\br^d).
\end{equation}

The hat denotes the Fourier transform of the measure
$$\widehat\mu(t)=\int e^{2\pi it\cdot x}\,d\mu(x),\quad(t\in\br^d).$$
\end{enumerate}
\end{proposition}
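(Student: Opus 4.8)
The plan is to exploit the self-similarity identity \eqref{eq1_4} for $\mu$, which at the level of Fourier transforms becomes a scaling relation. Applying $f=e_t$ to \eqref{eq1_4} gives $\widehat\mu(t)=\frac1N\sum_{b\in B}e^{2\pi i R^{-1}t\cdot b}\,\widehat\mu(R^{-1}t)$, i.e. $\widehat\mu(t)=m_B(R^{-1}t)\,\widehat\mu({R^*}^{-1}t)$ where $m_B(t)=\frac1N\sum_{b\in B}e^{2\pi i b\cdot t}$ and I should be careful to track whether it is $R^{-1}$ or ${R^*}^{-1}$ that appears — writing $t\cdot R^{-1}x={R^*}^{-1}t\cdot x$ shows the correct dilation on $\widehat\mu$ is by ${R^*}^{-1}$. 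The first step is therefore to record this functional equation and, more importantly, its consequence for the ``frame function'' $F(x):=\sum_{\lambda\in\Lambda}|\widehat\mu(x-{R^*}^{-1}\lambda)|^2$.

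The second step is to prove part (i). Since $E(\Lambda)$ is a frame for $L^2(\mu)$ with bounds $m,M$, I want to show $E({R^*}^{-1}\Lambda)$ is a frame with bounds $Nm,NM$. The natural tool is a ``transfer operator'' type computation: for $f\in L^2(\mu)$, expand $\sum_{\lambda}|\langle f,e_{{R^*}^{-1}\lambda}\rangle_\mu|^2$ and use \eqref{eq1_4} to split the integral defining each inner product into the $N$ pieces indexed by $b\in B$. Concretely, $\langle f,e_{{R^*}^{-1}\lambda}\rangle_\mu=\int f(x)e^{-2\pi i {R^*}^{-1}\lambda\cdot x}\,d\mu(x)=\frac1N\sum_{b\in B}\int f(\tau_b(x))e^{-2\pi i{R^*}^{-1}\lambda\cdot\tau_b(x)}\,d\mu(x)$, and since ${R^*}^{-1}\lambda\cdot\tau_b(x)={R^*}^{-1}\lambda\cdot R^{-1}(x+b)$ — here I will use that ${R^*}^{-1}\lambda\cdot R^{-1}x = ({R^*}^{-2}\lambda)\cdot x$, hmm, that introduces an extra ${R^*}^{-1}$, so I should instead phrase the oversampling directly in terms of the measures $\mu\circ\tau_b^{-1}$. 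The cleaner route: the no-overlap hypothesis gives the orthogonal decomposition $L^2(\mu)=\bigoplus_{b\in B} L^2(\mu|_{\tau_b(X)})$, and the map $U_b:L^2(\mu)\to L^2(\mu|_{\tau_b(X)})$, $(U_bg)(y)=\frac1{\sqrt N}g(\tau_b^{-1}y)$ is unitary onto its range by \eqref{eq1_4}. One checks that $U_b$ intertwines $e_\lambda$ with (a constant times) $e_{{R^*}^{-1}\lambda}$ up to a unimodular factor depending on $b$; summing the frame inequality for $E(\Lambda)$ pulled through each $U_b$ and using orthogonality of the ranges yields the frame inequality for $E({R^*}^{-1}\Lambda)$ with bounds scaled by $N$. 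I expect the bookkeeping of the unimodular factors $e^{-2\pi i{R^*}^{-1}\lambda\cdot R^{-1}b}$ and verifying they can be absorbed (they do not depend on the variable $y$, hence only rescale each summand coherently) to be the one genuinely delicate point.

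The third step is to iterate and pass to the Fourier-transform form \eqref{eqf1.1}. Iterating part (i) $n$ times shows ${R^*}^{-n}\Lambda$ is a frame spectrum with bounds $N^n m, N^n M$. Now apply the frame inequality for ${R^*}^{-n}\Lambda$ to the specific test functions $e_x\in L^2(\mu)$, $x\in\br^d$: since $\langle e_x, e_{{R^*}^{-n}\lambda}\rangle_\mu=\int e^{2\pi i(x-{R^*}^{-n}\lambda)\cdot y}\,d\mu(y)=\widehat\mu(x-{R^*}^{-n}\lambda)$ and $\|e_x\|_{L^2(\mu)}^2=\mu(\br^d)=1$, the frame bounds $N^nm\le \sum_\lambda|\langle e_x,e_{{R^*}^{-n}\lambda}\rangle_\mu|^2\le N^nM$ become exactly \eqref{eqf1.1}. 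The only thing to note here is that $e_x\in L^2(\mu)$ because $\mu$ is a finite (probability) measure with compact support, so no integrability issue arises. The main obstacle overall is really just Step 2 — getting the unitary/intertwining argument and the scaling of the bounds exactly right under the no-overlap hypothesis — after which Step 3 is essentially a substitution.
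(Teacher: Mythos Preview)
Your Step~3 is fine and matches the paper: once (i) is in hand, iterate $n$ times and test against $f=e_x$ to get \eqref{eqf1.1}. The gap is entirely in Step~2.

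First, two slips in the ``cleaner route'': with $(U_bg)(y)=\tfrac1{\sqrt N}g(\tau_b^{-1}y)$ the map is \emph{not} an isometry (the correct factor is $\sqrt N$), and more importantly $U_b e_\lambda(y)=\text{(const)}\cdot e^{2\pi i\lambda\cdot(Ry-b)}$, so $U_b$ sends $e_\lambda$ to a multiple of $e_{R^*\lambda}$, not $e_{R^{*-1}\lambda}$. That is exactly the ``extra $R^{*-1}$'' you flagged in your first attempt; passing to unitaries does not make it disappear. More seriously, ``summing the frame inequality pulled through each $U_b$'' does not close: writing $f=\sum_b f_b$ with $f_b=f\chi_{\tau_b(X)}$, what the sum over $b$ controls is $\sum_{b,\lambda}|\langle f_b,e_{\,\cdot\,}\rangle|^2$, whereas you need $\sum_\lambda\bigl|\sum_b\langle f_b,e_{\,\cdot\,}\rangle\bigr|^2$. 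The unimodular factors $e^{-2\pi i R^{*-1}\lambda\cdot R^{-1}b}$ depend on both $b$ and $\lambda$, so the cross terms do not cancel and the lower bound is lost.

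The paper avoids all of this by using a \emph{single} branch, $b=0$ (recall the standing convention $0\in B$, so $\tau_0=R^{-1}$). Then $e_{R^{*-1}\lambda}(x)=e^{2\pi i\lambda\cdot R^{-1}x}=e_\lambda(\tau_0(x))$ exactly, and the no-overlap change-of-variables lemma $\int h\circ\tau_0\,d\mu=N\int_{\tau_0(X)}h\,d\mu$ gives
\[
\langle g,e_{R^{*-1}\lambda}\rangle \;=\; N\,\bigl\langle \chi_{\tau_0(X)}\,g\circ\tau_0^{-1},\,e_\lambda\bigr\rangle .
\]
Now apply the frame inequality for $\Lambda$ to the single function $\chi_{\tau_0(X)}\,g\circ\tau_0^{-1}$, and use the same lemma once more to get $N\|\chi_{\tau_0(X)}\,g\circ\tau_0^{-1}\|^2=\|g\|^2$; this yields the bounds $Nm,\,NM$ with no cross terms and no sum over $B$. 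In your unitary language, this is: $\{e_\lambda|_{\tau_0(X)}\}$ is already a frame for $L^2(\mu|_{\tau_0(X)})$ with the \emph{same} bounds $m,M$ (projection of a frame), and the single correctly normalized unitary $L^2(\mu|_{\tau_0(X)})\to L^2(\mu)$ carries it to $\{\tfrac{1}{\sqrt N}e_{R^{*-1}\lambda}\}$, hence $E(R^{*-1}\Lambda)$ has bounds $Nm,NM$.
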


\begin{proof}
First we need a Lemma.
\begin{lemma}\label{pri5}
If the measure $\mu_B$ has no overlap, then for all $\mu_B$-integrable functions $g$, and any $b_0\in B$:
$$\int g\circ\tau_{b_0}\,d\mu_B=N\int_{\tau_{b_0}(X_B)}g\,d\mu_B.$$
\end{lemma}
\begin{proof}
From the invariance equation
$$N\int\chi_{\tau_{b_0}(X_B)}g\,d\mu_B=\sum_{b\in B}\int\chi_{\tau_{b_0}(X_B)}\circ\tau_b\,g\circ\tau_b\,d\mu_B.$$
Since there is no overlap $\chi_{\tau_{b_0}(X_B)}\circ\tau_{b}$ is 1 if $b=b_0$, and is 0 if $b\neq b_0$, $\mu_B$-almost everywhere (since
$\mu_B$ is supported on $X_B$).

This leads to the conclusion.
\end{proof}

We return to the proof of Proposition \ref{pri6}. (i) We assumed $0\in B$. Take $g$ in $L^2(\mu)$. Then
$$\ip{g}{e_{\Rs^{-1}\lambda}}=\int g(x) \cj{e_{\Rs^{-1}\lambda}(x)}\,d\mu(x)=\int g(x)\cj{e_\lambda(R^{-1}x)}\,d\mu(x)
=\int g\circ\tau_0^{-1}\circ\tau_0\,\cj e_\lambda\circ\tau_0\,d\mu$$ and using Lemma \ref{pri5},
$$=N\int_{\tau_0(X_B)} g\circ \tau_0^{-1}\cj e_\lambda\,d\mu=N\ip{\chi_{\tau_0(X_B)}g\circ\tau_0^{-1}}{e_\lambda}$$
Then
$$A_g:=\sum_{\lambda\in\Lambda}|\ip{g}{e_{\Rs^{-1}\lambda}}|^2=N^2\sum_{\lambda\in\Lambda}|\ip{\chi_{\tau_0(X_B)}g\circ\tau_0^{-1}}{e_\lambda}|^2.$$
Using the frame bounds for $\Lambda$ we obtain that
$$N^2m \|\chi_{\tau_0(X_B)}g\circ\tau_0^{-1}\|^2\leq A_g\leq N^2M\|\chi_{\tau_0(X_B)}g\circ\tau_0^{-1}\|^2.$$
Using Lemma \ref{pri5} again, we have that
$$N\|\chi_{\tau_0(X_B)}g\circ\tau_0^{-1}\|^2=N\int_{\tau_0(X_B)}|g|^2\circ\tau_0^{-1}\,d\mu=\int |g|^2\,d\mu=\|g\|^2.$$
This yields the conclusion.

By (i) it is enough to prove \eqref{eqf1.1} for $n=0$. But this follows by an application of the frame inequalities to the
function $e_x$, and using the fact that $\ip{e_x}{e_\lambda}=\widehat\mu_B(x-\lambda)$.
\end{proof}

The next proposition shows that, similar to the celebrated Paley-Wiener theorem \cite[Theorem XXXVII]{MR1451142} the Bessel property is invariant under uniformly bounded perturbations of the Bessel spectrum and the frame
property is stable under small perturbation of the frame spectrum. The proof is a small modification of the proofs of Lemma II and III in \cite{DuSc52}.
\begin{proposition}\label{prst}
Let $\mu$ be a compactly supported Borel probability measure on $\br^d$. Let $\Lambda=\{\lambda_n : n\in\bn\}$ and
$\Gamma=\{\mu_n : n\in\bn\}$ be two subsets of $\br^d$ with the property that there exists an $L>0$ such that
$$|\lambda_n-\mu_n|\leq L,\quad(n\in\bn)$$
\begin{enumerate}
\item If $\Lambda$ is a Bessel spectrum for $\mu$ then $\Gamma$ is one too. Moreover, the Bessel bound for $\Gamma$ depends only on the Bessel bound for $\Lambda$, on $L$ and on the size of the support of $\mu$.

 \item If $\Lambda$ is a frame spectrum for $\mu$, then there exists
a $\delta>0$ such that if $L\leq \delta$ then $\Gamma$ is a frame spectrum too. Moreover, $\delta$ depends only on the frame bounds of $\Lambda$
and on the size of the support of $\mu$.
\end{enumerate}
\end{proposition}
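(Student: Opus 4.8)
The plan is to deduce both statements from a single estimate comparing the analysis operators of $E(\Lambda)$ and $E(\Gamma)$, in the spirit of Lemmas II and III of \cite{DuSc52}. Fix $r>0$ with $\mathrm{supp}(\mu)\subseteq\overline{B(0,r)}$; this $r$ quantifies ``the size of the support of $\mu$''. Put $v_n:=\mu_n-\lambda_n$, so that $|v_n|\leq L$ and hence $|v_n\cdot x|\leq Lr$ for every $x\in\mathrm{supp}(\mu)$. The whole argument rests on controlling, for an arbitrary $f\in L^2(\mu)$, the quantity $\big(\sum_n|\ip{f}{e_{\mu_n}}-\ip{f}{e_{\lambda_n}}|^2\big)^{1/2}$ by a constant times $\|f\|$, the constant depending only on the Bessel bound of $\Lambda$, on $L$ and on $r$, and tending to $0$ as $L\to0$.

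To obtain this estimate I would expand in a power series. On $\mathrm{supp}(\mu)$ the series $e^{-2\pi i v_n\cdot x}=\sum_{\alpha}\frac{(-2\pi i)^{|\alpha|}}{\alpha!}\,v_n^\alpha x^\alpha$ (the sum over all multi-indices $\alpha$, with $x^\alpha=x_1^{\alpha_1}\cdots x_d^{\alpha_d}$) converges absolutely and uniformly, so integrating against $\overline{e_{\lambda_n}}\,d\mu$ — the rearrangement being justified by $\|e_{\lambda_n}\|_{L^2(\mu)}=1$ and by $\|f\cdot x^\alpha\|_{L^2(\mu)}\leq r^{|\alpha|}\|f\|$, where $f\cdot x^\alpha$ is the pointwise product — gives
\[
\ip{f}{e_{\mu_n}}=\sum_{\alpha}\frac{(-2\pi i)^{|\alpha|}}{\alpha!}\,v_n^\alpha\,\ip{f\cdot x^\alpha}{e_{\lambda_n}},
\]
the $\alpha=0$ term being exactly $\ip{f}{e_{\lambda_n}}$. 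Now apply the triangle inequality in $\ell^2$ over the index $n$, then the uniform bound $|v_n^\alpha|\leq L^{|\alpha|}$, then the Bessel bound $M$ of $\Lambda$ to each function $f\cdot x^\alpha$, and finally $\|f\cdot x^\alpha\|\leq r^{|\alpha|}\|f\|$; since $\sum_\alpha t^{|\alpha|}/\alpha!=e^{dt}$ (a product of $d$ exponential series), this yields
\[
\Big(\sum_n|\ip{f}{e_{\mu_n}}-\ip{f}{e_{\lambda_n}}|^2\Big)^{1/2}\leq\sqrt{M}\,(e^{2\pi dLr}-1)\,\|f\|=:\varepsilon(L)\,\|f\|,
\]
and, keeping the $\alpha=0$ term, $\big(\sum_n|\ip{f}{e_{\mu_n}}|^2\big)^{1/2}\leq\sqrt{M}\,e^{2\pi dLr}\,\|f\|$.

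The conclusions are then immediate. For (i), the last inequality shows $\Gamma$ is a Bessel spectrum with bound $M\,e^{4\pi dLr}$, which indeed depends only on $M$, $L$ and $r$. For (ii), let $m,M$ be the frame bounds of $\Lambda$ and let $\Theta_\Lambda,\Theta_\Gamma\colon L^2(\mu)\to\ell^2$ be the analysis operators $f\mapsto(\ip{f}{e_{\lambda_n}})_n$ and $f\mapsto(\ip{f}{e_{\mu_n}})_n$; the displayed estimate says $\|\Theta_\Lambda-\Theta_\Gamma\|\leq\varepsilon(L)$. Combining $\sqrt{m}\,\|f\|\leq\|\Theta_\Lambda f\|\leq\sqrt{M}\,\|f\|$ with the ordinary and reverse triangle inequalities gives $(\sqrt{m}-\varepsilon(L))\|f\|\leq\|\Theta_\Gamma f\|\leq(\sqrt{M}+\varepsilon(L))\|f\|$, so $\Gamma$ is a frame spectrum, with bounds $(\sqrt{m}-\varepsilon(L))^2$ and $(\sqrt{M}+\varepsilon(L))^2$, as soon as $\varepsilon(L)<\sqrt{m}$, i.e. for any $\delta>0$ with $\sqrt{M}(e^{2\pi dr\delta}-1)<\sqrt{m}$ and $L\leq\delta$. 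Such a $\delta$ depends only on $m$, $M$ and $r$, as required.

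I expect the only genuine obstacle to be the $n$-dependence of the perturbed exponentials. One can write $\ip{f}{e_{\mu_n}}=\ip{f\,\overline{e_{v_n}}}{e_{\lambda_n}}$ and note that $\overline{e_{v_n}}-1$ is uniformly small on $\mathrm{supp}(\mu)$; but the Bessel (resp. frame) inequality for $\Lambda$ is a statement about a single fixed function, whereas $f\,\overline{e_{v_n}}$ varies with $n$, so it cannot be applied directly. The monomial expansion removes this obstruction by transferring all the $n$-dependence onto the scalars $v_n^\alpha$, uniformly bounded by $L^{|\alpha|}$, and it is precisely here that the compactness of $\mathrm{supp}(\mu)$ is needed: it makes $\sum_\alpha(2\pi Lr)^{|\alpha|}/\alpha!$ converge and supplies $\|f\cdot x^\alpha\|\leq r^{|\alpha|}\|f\|$. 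Everything else — justifying the interchanged sums and the $\bn$-indexing bookkeeping — is routine, and this is the sense in which the proof is a small modification of Lemmas II and III of \cite{DuSc52}.
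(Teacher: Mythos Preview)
Your proof is correct and follows essentially the same approach as the paper: expand the perturbed exponential as a power series, apply the Bessel bound of $\Lambda$ to each monomial factor $f\cdot x^\alpha$, and conclude via the triangle/reverse triangle inequality in $\ell^2$. The only cosmetic differences are that the paper perturbs one coordinate at a time (and inducts over the $d$ coordinates) and inserts a Cauchy--Schwarz step before summing, whereas you handle all $d$ coordinates at once via multi-indices and apply Minkowski directly; your version is slightly cleaner and yields an explicit constant $\sqrt{M}\,(e^{2\pi dLr}-1)$ in one stroke.
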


\begin{proof} (i) It is enough to prove the assertion for the case when for all $n\in\bn$, $\mu_n$ differs from $\lambda_n$ only on the first component, because then the statement follow by induction on the number of components.

For $x\in\br^d$ we denote by $(x_1,\dots,x_d)$ its components.

Let $f\in L^\infty(\mu)$ and define by
$$\tilde f(t):=\ip{f}{e_t}=\int f(x)e^{-2\pi it\cdot x}\,d\mu(x),\quad(t\in\br^d).$$
Clearly $\tilde f$ is analytic in each variable $t_1,\dots,t_d$. Also
\begin{equation}
\frac{\partial^k\tilde f}{\partial t_1^k}(t)=\int f(x)(-2\pi ix_1)^ke^{-2\pi it\cdot x}\,d\mu(x)=\ip{(-2\pi ix_1)^kf}{e_t},\quad(t\in\br^d).
\label{eqstde1}
\end{equation}
We have for all $n\in\bn$, using the Taylor expansion at $(\lambda_n)_1$ in the first variable:
$$|\tilde f(\mu_n)-\tilde f(\lambda_n)|^2=\left|\sum_{k=1}^\infty\frac{\frac{\partial^k\tilde f}{\partial t_1^k}(\lambda_n)}{k!}((\mu_n)_1-(\lambda_n)_1)^k\right|^2$$
and using the Cauchy-Schwarz inequality
$$\leq \sum_{k=1}^\infty\frac{\left|\frac{\partial^k\tilde f}{\partial t_1^k}(\lambda_n)\right|^2}{k!}\cdot\sum_{k=1}^\infty\frac{|(\mu_n)_1-(\lambda_n)_1|^{2k}}{k!}
\leq \sum_{k=1}^\infty\frac{\left|\frac{\partial^k\tilde f}{\partial t_1^k}(\lambda_n)\right|^2}{k!}\cdot\sum_{k=1}^\infty\frac{L^{2k}}{k!}$$
$$=\sum_{k=1}^\infty\frac{\left|\frac{\partial^k\tilde f}{\partial t_1^k}(\lambda_n)\right|^2}{k!}\cdot(e^{L^2}-1).$$

Also, since $\Lambda$ is a Bessel spectrum with Bessel bound $B$,
$$\sum_{n\in\bn}\left|\frac{\partial^k\tilde f}{\partial t_1^k}(\lambda_n)\right|^2=\sum_{n\in\bn}\left|\ip{(-2\pi ix_1)^kf}{e_{\lambda_n}}\right|^2
\leq B\|(-2\pi ix_1)^kf\|^2\leq B(2\pi M)^{2k}\|f\|^2$$ where $M$ is picked in such a way that the support of $\mu$ is contained in the ball
$B(0,M)$.

Using these inequalities and interchanging the order of summation, we obtain
\begin{equation}
\sum_{n\in\bn}|\tilde f(\mu_n)-\tilde f(\lambda_n)|^2\leq B(e^{L^2}-1)\|f\|^2\sum_{k=1}^\infty\frac{(2\pi
M)^{2k}}{k!}=B(e^{L^2}-1)(e^{M^2}-1)\|f\|^2. \label{eqst2}
\end{equation}
Using Minkowski's inequality we obtain then
$$\left(\sum_{n}|\tilde f(\mu_n)|^2\right)^{1/2}\leq\left(\sum_{n}|\tilde f(\lambda_n)|^2\right)^{1/2}+\left(\sum_n|\tilde f(\mu_n)-\tilde f(\lambda_n)|^2\right)^{1/2}$$$$
\leq \left(B^{1/2}+(B(e^{L^2}-1)(e^{M^2}-1))^{1/2}\right)\|f\|$$ and this implies that $\Gamma$ is a Bessel spectrum.

(ii) We saw in (i) that the Bessel property is preserved. Let $A,B$ be the lower and upper frame bounds for $\Lambda$. Pick $\delta>0$ small
enough so that
$$A^{1/2}-(B(e^{L^2}-1)(e^{M^2}-1))^{1/2}>0$$
for $0<L\leq\delta$. Then, using \eqref{eqst2} and Minkowski's inequality we have
$$\left(\sum_{n}|\tilde f(\mu_n)|^2\right)^{1/2}\geq\left(\sum_{n}|\tilde f(\lambda_n)|^2\right)^{1/2}-\left(\sum_n|\tilde f(\mu_n)-\tilde f(\lambda_n)|^2\right)^{1/2}$$
$$\geq\left( A^{1/2}-(B(e^{L^2}-1)(e^{M^2}-1))^{1/2}\right)\|f\|.$$
So $\Gamma$ is a frame. Note that $\delta$ depends only on $A,B$ and $M$, not on $\mu$ or $\Lambda$.
\end{proof}

\section{Beurling dimension}\label{beu}
\begin{definition}\label{deff3}
\cite{CKS08} Let $\Lambda$ be a discrete subset of $\br^d$. For $r>0$, the {\it upper Beurling density corresponding to $r$} (or {\it $r$-Beurling density}) is defined by
$$\mathcal D_r^+(\Lambda):=\limsup_{h\rightarrow\infty}\sup_{x\in\br^d}\frac{\#(\Lambda\cap(x+h[-1,1]^d))}{h^r}.$$
The {\it upper Beurling dimension} (or simply the {\it Beurling dimension}) is defined by
$$\dim^+(\Lambda):=\sup\{r>0 : \D^+_r(\Lambda)>0\}.$$
Alternatively,
$$\dim^+(\Lambda)=\inf\{r>0 : \D_r^+(\Lambda)<\infty\}.$$

Given a set of exponential functions $E(\Lambda):=\{e_\lambda : \lambda\in\Lambda\}$ we also say that $\mathcal D_r^+(\Lambda)$ is the $r$-Beurling density of $E(\Lambda)$.
\end{definition}

\begin{definition}\label{deff4}
We say that a $d\times d$ real matrix $R$ is a {\it similarity with scaling factor} $\rho>0$ if there exists an orthogonal matrix $O$ such that
$R=\rho O$.

\end{definition}

\begin{remark}
Note that if $R$ is a similarity with scaling factor $\rho$, then so is $R^*$, and $\|R^*x\|=\rho\|x\|$ for all $x\in\br^d$, and $R^* \cj
B(0,r)=\rho \cj B(0,r)$ for all $r>0$, where $\cj B(0,r)$ is the closed ball of center $0$ and radius $r$.

\end{remark}

\begin{remark}\label{rem3.4}
Suppose $R$ is a similarity with scaling costant $\rho$, and let $X_B$ be the attractor of the affine iterated function system $(\tau_b)_{b\in B}$. Assume in addition that the {\it open set condition} is satisfied, i.e., there exists a non-empty open set $V$ such that
$$V\supset\bigcup_{b\in B}\tau_b(V).$$
Then Theorem 9.3 in \cite{Fal} shows that the Hausdorff dimension of $X_B$ is $s:=\log_\rho N$. Moreover, for this value, the Hausdorff measure of $X_B$ satisfies $0<\H^s(X_B)<\infty$.

Thus the next theorem will prove that, under these conditions, the Beurling density of a frame spectrum for the invariant measure $\mu_B$ will be equal to the Hausdorff measure of the attractor $X_B$.

The Hausdorff measure has the property that $\H^s(R\cdot E)=N\H^s(E)$ for all Borel subsets $E$, since $R$ is a similarity. Also $\H^s(E+t)=\H^s(E)$ for all $t\in\br^d$. Therefore, it is easy to check that the restriction of the Hausdorff measure $\H^s$ to $X_B$ satisfies the invariance equation \eqref{eq1_4}, under these conditions (which guarantee also that there is no overlap). Therefore, since the invariant measure is unique, it follows that $\mu_B$ is the restriction of the Hausdorff measure $\H^s$ to $X_B$, renormalized so that it is probability measure.

\end{remark}

\begin{theorem}\label{thf4}
Let $\mu=\mu_B$ be the invariant measure of the affine IFS $(\tau_b)_{b\in B}$, and assume $R$ is a similarity with scaling constant $\rho>1$.

(a) If $\Lambda$ is a Bessel spectrum for $\mu$ then its upper Beurling dimension satisfies

\begin{equation}
    \dim^+(\Lambda)\leq\log_\rho N.
    \label{eqf6}
\end{equation}
Moreover
\begin{equation}
    \cD_{\log_\rho N}^+(\Lambda)<\infty.
    \label{eqf7}
\end{equation}

(b) Assume in addition that the following condition is satisfied: there exists a natural number $p\geq 1$, such that
\begin{equation}
    \sup_{\lambda\in\Lambda}\operatorname*{dist}\left(\Rs^{-p}\lambda,\Lambda\right)<\infty.
    \label{eqfb1}
\end{equation}
Then, if $\Lambda$ is a frame spectrum for $\mu$, the upper Beurling dimension is
\begin{equation}
    \dim^+(\Lambda)=\log_\rho N=\mbox{\textup{Hausdorff dimension}}(X_B).
    \label{eqfb2}
\end{equation}
\end{theorem}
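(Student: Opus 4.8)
The plan is to prove the two parts by relating the $r$-Beurling density of a Bessel/frame spectrum $\Lambda$ to the quantity $\sum_{\lambda\in\Lambda}|\widehat\mu(x-{R^*}^{-n}\lambda)|^2$ that already appeared in Proposition \ref{pri6}. The idea is that $\widehat\mu$ does not decay too fast in an $L^2$-averaged sense, so a good lower bound for $|\widehat\mu|$ on a ball forces an upper bound on how many points of $\Lambda$ can fit in a box.

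For part (a): First I would establish a non-vanishing-average estimate for $\widehat\mu$. Since $\mu$ is a compactly supported probability measure, $\widehat\mu$ is continuous with $\widehat\mu(0)=1$, so there is a radius $c_0>0$ and a constant $a>0$ with $|\widehat\mu(t)|^2\geq a$ for $t\in\cj B(0,c_0)$. Now fix $x\in\br^d$, $h>0$, and let $n$ be chosen so that ${R^*}^{-n}(x+h[-1,1]^d)$ has diameter at most $c_0$; since $R^*$ is a similarity with factor $\rho$, the box $x+h[-1,1]^d$ has diameter comparable to $h$, so we need $\rho^{-n}h\lesssim c_0$, i.e. $n\approx \log_\rho h$. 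Then for \emph{every} point $y$ in the image box, in particular for $y=\widehat{x}:={R^*}^{-n}x_0$ for a suitable center, each $\lambda\in\Lambda\cap(x+h[-1,1]^d)$ contributes $|\widehat\mu(\widehat x-{R^*}^{-n}\lambda)|^2\geq a$ because ${R^*}^{-n}\lambda$ lies within $c_0$ of $\widehat x$. Combined with the upper bound $\sum_{\lambda\in\Lambda}|\widehat\mu(\widehat x-{R^*}^{-n}\lambda)|^2\leq MN^n$ from \eqref{eqf1.1}, we get
\begin{equation*}
a\cdot\#\bigl(\Lambda\cap(x+h[-1,1]^d)\bigr)\leq MN^n\approx MN^{\log_\rho h}=M h^{\log_\rho N}.
\end{equation*}
Taking $\sup_x$ and $\limsup_{h\to\infty}$ gives $\cD^+_{\log_\rho N}(\Lambda)\leq M/a<\infty$, which is \eqref{eqf7}, and \eqref{eqf6} follows from the definition of Beurling dimension. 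The bookkeeping with the ``$\approx$'' — ensuring $n=\lfloor\log_\rho(h/c_0)\rfloor$ or so and that the constants absorb the rounding — is routine but must be done carefully; I expect this to be the only genuinely fiddly part of (a).

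For part (b): The upper bound $\dim^+(\Lambda)\leq\log_\rho N$ is already given by (a) since a frame spectrum is a Bessel spectrum. So the task is the reverse inequality $\dim^+(\Lambda)\geq\log_\rho N$, equivalently $\cD^+_r(\Lambda)>0$ for all $r<\log_\rho N$ (or even $\cD^+_{\log_\rho N}(\Lambda)>0$). Here I would use the \emph{lower} frame bound together with the oversampling hypothesis \eqref{eqfb1}. The point of \eqref{eqfb1} is this: iterating, $\sup_{\lambda\in\Lambda}\operatorname{dist}({R^*}^{-kp}\lambda,\Lambda)$ stays bounded — say by $L$ — for all $k$, because the extra contractions by $\rho^{-p}$ only shrink the displacement. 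Thus for every $k$, the set ${R^*}^{-kp}\Lambda$ is an $L$-perturbation of a \emph{subset} of $\Lambda$... actually one must be slightly careful about multiplicities, but the key consequence is that ${R^*}^{-kp}\Lambda$ has bounded upper Beurling density for all $k$: combining this with Proposition \ref{prst}(i) (perturbation stability of Bessel bounds) and the fact that ${R^*}^{-kp}\Lambda$ injects into a neighborhood of $\Lambda$, one sees $\cD^+_r({R^*}^{-kp}\Lambda)\leq C$ uniformly. On the other hand, a direct scaling computation gives $\cD^+_r({R^*}^{-kp}\Lambda)=\rho^{kpr}\,\cD^+_r(\Lambda)$ — the box $x+h[-1,1]^d$ pulls back under ${R^*}^{-kp}$ to a box of side $\rho^{kp}h$. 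If $\cD^+_r(\Lambda)=0$ this is consistent, so this alone does not give a contradiction; I need the lower frame bound.

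The correct route for the lower bound, I believe, is: apply the lower frame inequality to the function $e_x$, giving $m\leq\sum_{\lambda\in\Lambda}|\widehat\mu(x-\lambda)|^2$ for all $x$, and more generally (via Proposition \ref{pri6} after oversampling by ${R^*}^{-kp}$, which by \eqref{eqfb1} and Proposition \ref{prst} yields a frame spectrum that is an $L$-perturbation of a density-comparable copy of $\Lambda$) a bound $m'\leq\sum_{\lambda\in\Lambda}|\widehat\mu(x-{R^*}^{-kp}\lambda)|^2$ with $m'$ independent of $k$, valid for all $x\in\br^d$. Now the series $\sum_\lambda|\widehat\mu(x-{R^*}^{-kp}\lambda)|^2$ must be broken into the contribution from $\lambda$ with ${R^*}^{-kp}\lambda$ in a large ball $\cj B(0,T)$ and the tail. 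Using the Bessel-type decay of $\widehat\mu$ one shows the tail is small uniformly in $k$ once $T$ is large; then the main-ball contribution must be bounded below, i.e. $\sum_{\{\lambda:\,{R^*}^{-kp}\lambda\in \cj B(0,T)\}}|\widehat\mu(x-{R^*}^{-kp}\lambda)|^2\geq m'/2$. Since $\widehat\mu$ is bounded by $1$, this forces $\#\{\lambda:{R^*}^{-kp}\lambda\in \cj B(0,T)\}\geq m'/2$; but ${R^*}^{-kp}\lambda\in\cj B(0,T)$ means $\lambda\in\cj B(0,\rho^{kp}T)$, so $\#(\Lambda\cap \cj B(0,\rho^{kp}T))\geq m'/2$ — which is far too weak. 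So the tail estimate must be upgraded: one needs that the tail \emph{outside} $\cj B(0,T)$ is controlled in terms of $\cD^+_{\log_\rho N}(\Lambda)$ — roughly, $\sum_{\{\lambda:\,\|{R^*}^{-kp}\lambda\|>T\}}|\widehat\mu(x-{R^*}^{-kp}\lambda)|^2\leq C\,\cD^+_{\log_\rho N}(\Lambda)\cdot\varepsilon(T)$ with $\varepsilon(T)\to0$, using that $\widehat\mu$ decays (in an $L^2$-averaged shell-by-shell sense) while the number of $\lambda$ in the shell of radius $\sim\rho^{kp}\cdot 2^j$ is at most $\cD^+_{\log_\rho N}(\Lambda)\,(\rho^{kp}2^j)^{\log_\rho N}=\cD^+_{\log_\rho N}(\Lambda)\,N^{kp}2^{j\log_\rho N}$, and dividing by the $N^{kp}$ coming from the lower bound $m' \sim m N^{kp}/N^{kp}$... — the upshot I am after is: \textbf{if} $\cD^+_{\log_\rho N}(\Lambda)$ were $0$, then the entire series $\sum_\lambda|\widehat\mu(x-{R^*}^{-kp}\lambda)|^2$ would be forced to $0$ as $k\to\infty$ (by letting the shell bound vanish uniformly), contradicting $m'>0$. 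Making this rigorous requires a careful $L^2$-averaged decay estimate for $\widehat\mu$ over dyadic shells — this is the main obstacle and is where the compact support plus the self-similar structure \eqref{eq1_4} of $\mu$ must be exploited (e.g. $\widehat\mu(\xi)=\prod_k m_B({R^*}^{-k}\xi)$ with $m_B(\xi)=\frac1N\sum_{b\in B}e^{2\pi i b\cdot\xi}$, which lets one bound $\int_{\cj B(0,1)}|\widehat\mu({R^*}^{n}\xi)|^2\,d\xi$ and hence average $|\widehat\mu|^2$ over shells against Lebesgue measure).

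In summary: part (a) is the density-counting argument bounding $\#(\Lambda\cap\text{box})$ by the frame upper bound times $N^n$ with $n\sim\log_\rho h$; part (b) combines (a), the oversampling hypothesis \eqref{eqfb1}, and Proposition \ref{prst} to produce a family of frame spectra ${R^*}^{-kp}\Lambda$ (up to bounded perturbation) with uniform lower bound $m'$, and then a uniform dyadic-shell $L^2$-decay estimate for $\widehat\mu$ forces $\cD^+_{\log_\rho N}(\Lambda)>0$, hence $\dim^+(\Lambda)\geq\log_\rho N$; with (a) this gives equality, and Remark \ref{rem3.4} identifies $\log_\rho N$ with the Hausdorff dimension of $X_B$. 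The hard part is the quantitative, uniform-in-$k$ shell decay estimate for $|\widehat\mu|^2$ needed in (b).
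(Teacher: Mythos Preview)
Your argument for part (a) is essentially the paper's own argument: pick $\delta>0$ with $|\widehat\mu|^2\geq\epsilon$ on $\delta U$, choose the smallest $n$ with $\rho^{-n}h\leq\delta$, and combine the pointwise lower bound with the upper estimate $\sum_\lambda|\widehat\mu({R^*}^{-n}(\lambda-x))|^2\leq MN^n$ from Proposition~\ref{pri6}(ii). The bookkeeping you flag is exactly the $\rho^{-n}h\geq\delta/\rho$ step. So (a) is fine.

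For part (b) your proposal has a genuine gap. The mechanism you reach for --- a ``uniform dyadic-shell $L^2$-decay estimate for $|\widehat\mu|^2$'' over shells, fed by the infinite-product formula for $\widehat\mu$ --- is exactly what fails for these fractal measures. For instance, for the middle-third Cantor measure one has $|\widehat\mu_3(3^n a)|=|\widehat\mu_3(a)|$ for every integer $a$ and every $n\geq0$ (this is precisely Proposition~\ref{prex2} in the paper); there is no shell decay to exploit, averaged or otherwise. So the tail cannot be made small by decay of $\widehat\mu$ alone. You also correctly notice that your first attempt only yields $\#(\Lambda\cap\cj B(0,\rho^{kp}T))\geq m'/2$, which is useless.

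The paper avoids any decay of $\widehat\mu$ by reorganizing the sum combinatorially. From \eqref{eqfb1} one gets maps $x:\Lambda\to\cj B(0,C)$ and $\gamma:\Lambda\to\Lambda$ with ${R^*}^{-p}\lambda=x(\lambda)+\gamma(\lambda)$; iterating gives ${R^*}^{-np}\lambda=x_n(\lambda)+\gamma^n(\lambda)$ with $\|x_n(\lambda)\|\leq D$ uniformly. The key counting step is that, if $\dim^+(\Lambda)<\log_\rho\beta$ for some $\beta<N$, then each fibre $\{\lambda:\gamma^n(\lambda)=\lambda'\}$ lies in a ball of radius $\sim\rho^{np}$ and hence has at most $E\beta^{np}$ elements. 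The second key ingredient --- the correct replacement for your ``tail estimate'' --- is Lemma~\ref{lemi10}:
\[
\sum_{\lambda'\in\Lambda}\sup_{\|x\|\leq D}|\widehat\mu(x+\lambda')|^2=:F<\infty,
\]
which follows \emph{not} from decay of $\widehat\mu$ but simply from the Bessel property of $\Lambda$ together with the perturbation stability of Proposition~\ref{prst}(i) (apply the Bessel inequality for $\{\lambda'+x_{\lambda'}\}$ to the constant function $1$, with a uniform Bessel bound, then take the sup over each $x_{\lambda'}$). Combining these with the lower bound $mN^{np}\leq\sum_\lambda|\widehat\mu({R^*}^{-np}\lambda)|^2$ from Proposition~\ref{pri6}(ii) yields $mN^{np}\leq EF\beta^{np}$, a contradiction. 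You had all the ingredients on the table (Proposition~\ref{prst}, iterating \eqref{eqfb1}, the lower frame bound at $x=0$), but the missing idea is to group by $\gamma^n(\lambda)=\lambda'$ and to realize that the ``tail'' control comes for free from the Bessel property via Lemma~\ref{lemi10}, not from any analytic decay of $\widehat\mu$.
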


\begin{proof} We first prove  \eqref{eqf7}.

Let $U:=\cj B(0,r_0)$, the closed ball centered at $0$ of radius $r_0$, where $r_0$ is picked in such a way that the Lebesgue measure of $U$ is $2^d$. By
\cite[Proposition 2.2]{CKS08}, one can compute the Beurling densities using the set $U$: for $r>0$,

$$\cD_r^+(\Lambda)=\limsup_{h\rightarrow\infty}\sup_{x\in\br^d}\frac{\#(\Lambda\cap(x+hU))}{h^r}.$$

Since $\widehat\mu_B$ is continuous and $\widehat\mu_B(0)=1$, we can pick $0<\epsilon<1$ and $\delta>0$ such that
$$|\widehat\mu_B(x)|^2\geq\epsilon\mbox{ for all }x\in \delta U.$$
Take $h>\delta$ arbitrary. Take the first $n\in\bn$ such that $\rho^{-n}h\leq\delta$. Then we also have $\rho^{-n}h\geq \delta/\rho$, and
$\rho^{-n}hU\subset \delta U$.

Now take an arbitrary $x\in\br^d$. We have, using the fact that $R^*$ is a similarity so it scales distances:
$$\epsilon\cdot\#(\Lambda\cap(x+hU))=\epsilon\cdot\#(({R^*}^{-n}(\Lambda-x))\cap(\Rs^{-n}(hU)))=\epsilon\cdot\#(({R^*}^{-n}(\Lambda-x))\cap(\rho^{-n}hU))$$
$$\leq \sum_{\lambda\in\Lambda, \Rs^{-n}(\lambda-x)\in \rho^{-n}hU}\left|\widehat\mu_B\left(\Rs^{-n}(\lambda-x)\right)\right|^2\leq\sum_{\lambda\in\Lambda}\left|\widehat\mu_B\left(\Rs^{-n}(\lambda-x)\right)\right|^2\leq BN^n,$$
where we have used Proposition \ref{pri6}(ii) in the last inequality.

On the other hand, since $\rho^{-n}h\geq\delta/\rho$, we have that $h\geq \rho^n\delta/\rho$ so
$$\frac{\epsilon\cdot\#(\Lambda\cap(x+hU))}{h^{\log_\rho N}}\leq \frac{BN^n}{\left(\frac{\delta}{\rho}\right)^{\log_\rho N}N^n}=:C.$$

But this shows that $\cD_{\log_\rho N}^+(\Lambda)\leq C/\epsilon$ so \eqref{eqf7} is proved, and $\dim^+(\Lambda)\leq \log_\rho N$.

We prove (b)  by contradiction. Assume $\dim^+(\Lambda)<\log_\rho N$. Pick $\beta<N$ such that $\dim^+(\Lambda)<\log_\rho\beta$. Then by the definition of the Beurling dimension
$\cD_{\log_\rho\beta}^+(\Lambda)=0$. Therefore there exists $h_0>0$ such that
\begin{equation}
    \#(\Lambda\cap (x+h U))\leq h^{\log_\rho\beta},\mbox{ for }h\geq h_0, x\in\br^d.
    \label{eqfb8}
\end{equation}

The assumption \eqref{eqfb1} implies that there exists $C>0$ and two functions $x:\Lambda\rightarrow \cj B(0,C)$,
$\gamma:\Lambda\rightarrow\Lambda$ such that
\begin{equation}
    \Rs^{-p}\lambda=x(\lambda)+\gamma(\lambda)\mbox{ for all }\lambda\in\Lambda.
    \label{eqfb9}
\end{equation}

Iterating \eqref{eqfb9} we get for all $n$:
$$\Rs^{-np}\lambda=\Rs^{-(n-1)p} x(\lambda)+\dots+ x(\gamma^{n-1}(\lambda))+\gamma^n(\lambda)=:x_n(\lambda)+\gamma^n(\lambda),$$
where by $\gamma^n$ we mean $\gamma$ composed with itself $n$ times.

Since $\Rs$ is a similarity, we have
$$\|x_n(\lambda)\|\leq C(\rho^{-(n-1)p}+\dots+1)\leq D$$
for some constant $D$.

Thus
\begin{equation}
    \Rs^{-np}\lambda=x_n(\lambda)+\gamma^n(\lambda),\mbox{ with }\|x_n(\lambda)\|\leq D\mbox{ and }\gamma^n(\lambda)\in\Lambda\mbox{ for all }\lambda\in\Lambda.
    \label{eqfb10}
\end{equation}

We claim that there is a constant $E$ such that for $n$ big enough
\begin{equation}
    \#\left\{\lambda\in\Lambda : \gamma^n(\lambda)=\lambda'\right\}\leq E\beta^{np},\mbox{ for all }\lambda'\in\Lambda.
    \label{eqfb11}
\end{equation}

Fix $\lambda'\in\Lambda$. For each $n$, let $s_n$ be the number of $\lambda$ such that $\gamma^n(\lambda)=\lambda'$. Let
$\lambda_1,\dots,\lambda_{s_n}$ be the list of such $\lambda$. Then, by \eqref{eqfb10},  $\Rs^{-np}\lambda=x_n(\lambda_i)+\lambda'$ for all $i$.
But then
$$\|\lambda_i-\lambda_1\|\leq \rho^{np}\|x_n(\lambda_i)-x_n(\lambda_1)\|\leq 2 D\rho^{np}.$$
Therefore $\lambda_i\in\lambda_1+(2D/r_0)\rho^{np} U$. If we take $n$ large enough, we can use \eqref{eqfb8} and obtain that $s_n\leq
((2D/r_0)\rho^{np})^{\log_\rho\beta}=:E\beta^{np}$. This implies \eqref{eqfb11}.

Next, we will need the following key lemma:

\begin{lemma}\label{lemi10}
For every $r>0$
\begin{equation}\label{eqi10.1}
\sum_{\lambda\in\Lambda}\sup_{\|x\|\leq r}|\widehat\mu(x+\lambda)|^2<\infty.
\end{equation}

\end{lemma}

\begin{proof}
For each $\lambda\in\Lambda$ pick an $x_\lambda\in \br^d$, with $\|x_\lambda\|\leq r$. Then, by Proposition \ref{prst}(i) we have that $\{\lambda+ x_\lambda : \lambda\in\Lambda\}$ is a Bessel frame with Bessel bound $M$ which does not depend on $\Lambda$ or $\{x_\lambda\}$ but only on the Bessel bound of $\Lambda$, on $r$ and on the size of the support of the measure, i.e., $X_B$. Writing the Bessel inequality for the constant function $1$, we get
$$\sum_{\lambda}|\widehat\mu(\lambda+x_\lambda)|^2\leq M.$$
Since $M$ does not depend on $\{x_\lambda\}$, we can let $x_\lambda$ vary, one by one, and replace
$|\widehat\mu(\lambda+x_\lambda)|^2$ by the corresponding supremum, thus obtaining Lemma \ref{lemi10}.
\end{proof}

Returning to the proof of the theorem, we apply Proposition \ref{pri6}(ii) to $x=0$ and we have, for $n$ big enough
$$mN^{np}\leq \sum_{\lambda\in\Lambda}|\widehat\mu(-\Rs^{-np}\lambda)|^2=\sum_{\lambda\in\Lambda}|\widehat\mu(\Rs^{-np}\lambda)|^2=\sum_{\lambda\in\Lambda}|\widehat\mu(x_n(\lambda)+\gamma^n(\lambda))|^2$$
and using \eqref{eqfb10}
$$=\sum_{\lambda'\in\Lambda}\sum_{\lambda\in\Lambda\, :\, \gamma^n(\lambda)=\lambda'}|\widehat\mu(x_n(\lambda)+\lambda')|^2\leq
\sum_{\lambda'}\sum_{\gamma^n(\lambda)=\lambda'}\sup_{\|x\|\leq D}|\widehat\mu(x+\lambda')|^2$$ and with \eqref{eqfb11}
$$\leq \sum_{\lambda'} E\beta^{np}\sup_{\|x\|\leq D}|\widehat\mu(x+\lambda')|^2.$$
But Lemma \ref{lemi10} shows that
$$\sum_{\lambda'}\sup_{\|x\|\leq D}|\widehat\mu(x+\lambda')|^2\leq F$$
for some finite constant $F$. And this implies $m N^{np}\leq EF\beta^{np}$ which contradicts the fact that $\beta<N$.
\end{proof}

\begin{remark}
We believe that condition \eqref{eqfb1} can be removed from the hypothesis of Theorem \ref{thf4}(b), but we were not able to do it. However, all examples of frame spectra that we know, do satsify this condition (see e.g., \cite{MR2509326,DJ06,DJ07d}). Also, for the classical case of the unit interval, if the lower Beurling density of $\Lambda$ is positive (as in Theorem \ref{th1.1}) then every interval of big enough length will contain some element of $\Lambda$, and therefore condition \eqref{eqfb1} will be clearly satsified.

\end{remark}

\begin{remark}
While the Beurling dimension seems to be a good invariant for frame spectra, the Beurling density can vary a lot. It was proved in \cite{DJ10} that for the Jorgensen-Pedersen Cantor set, where $R=4$, $B=\{0,2\}$, the sets 
$$\Lambda(5^k):=5^k\left\{\sum_{i=0}^n4^il_i : l_i\in\{0,1\}, n\in\bz+\right\},\quad (k\in\bz_+)$$
are all spectra, so the corresponding exponentials form orthonormal bases, not just frames!

The Beurling dimension for all the sets $\Lambda(5^k)$ in this example is $\ln 2/\ln 4=1/2$, and the upper $1/2$-Beurling desity for $\Lambda(5^0)$ can be checked to be positive. It is also finite, by Theorem \ref{thf4}(a). But since $\Lambda(5^k)=5^k\Lambda(5^0)$ it follows that the upper $1/2$-Beurling densities of the sets $\Lambda(5^k)$ decrease to 0. Thus one can have spectra of arbitarily small Beurling density.
\end{remark}

\begin{example}\label{ex2}
As we have explained in the introduction, for the classical case of the unit interval, the condition that the upper Beurling density is finite (for dimension equal to one), is also a sufficient condition for the set to be a Bessel spectrum for the Lebesgue measure on the unit interval. The next example will show that the situation is very different in the case of fractal measures.

Consider the IFS for the middle-third Cantor set, i.e., $R=3$, $B=\{0,2\}$, and let $\mu_3$ be its invariant measure.
\begin{proposition}\label{prex2}
For any integer $a\in\bz\setminus\{0\}$, and any infinite set of non-negative integers $F$, the set $\{3^na : n\in F\}$ cannot be a Bessel
spectrum for the middle-third Cantor measure $\mu_3$.

Moreover, any such set has Beurling dimension $0$.

\end{proposition}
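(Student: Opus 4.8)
\emph{Overview.} I would prove the two assertions separately. The failure of the Bessel property will follow from an exact scaling identity for $\widehat{\mu_3}$ at integer arguments together with the non-vanishing of $\widehat{\mu_3}$ at nonzero integers; the dimension statement is just geometric sparsity. Throughout write $\Lambda=\{3^na:n\in F\}$.

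\emph{The Fourier transform of $\mu_3$.} First I would extract from the invariance equation \eqref{eq1_4} the functional equation for $\widehat{\mu_3}$. Taking $f(x)=e^{2\pi itx}$ in \eqref{eq1_4} (legitimate after multiplying by a cutoff, since $\mu_3$ is compactly supported) with $R=3$, $B=\{0,2\}$, $N=2$ gives $\widehat{\mu_3}(t)=\tfrac12\bigl(1+e^{4\pi it/3}\bigr)\widehat{\mu_3}(t/3)$, hence, after $t\mapsto 3t$,
$$\widehat{\mu_3}(3t)=\frac{1+e^{4\pi it}}{2}\,\widehat{\mu_3}(t)\qquad(t\in\br).$$
If $t=c\in\bz$ the factor equals $1$, so $\widehat{\mu_3}(3c)=\widehat{\mu_3}(c)$, and by induction $\widehat{\mu_3}(3^nc)=\widehat{\mu_3}(c)$ for every $c\in\bz$ and every $n\ge 0$. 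Next I would show $\widehat{\mu_3}(c)\neq0$ for $c\in\bz\setminus\{0\}$: iterating the functional equation yields $\widehat{\mu_3}(c)=\prod_{k\ge1}\tfrac12\bigl(1+e^{4\pi ic/3^k}\bigr)$; the $k$-th factor vanishes only if $4c=3^k(2m+1)$ for some $m\in\bz$, which is impossible (left side even, right side odd), and since $\bigl|\tfrac12(1+e^{4\pi ic/3^k})-1\bigr|=|\sin(2\pi c/3^k)|\le 2\pi|c|/3^k$ is summable, the product converges to a nonzero limit.

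\emph{$\Lambda$ is not a Bessel spectrum, and $\dim^+(\Lambda)=0$.} If $\Lambda$ were a Bessel spectrum with bound $M$, then the Bessel inequality applied to the constant function $1$ (of norm $1$ in $L^2(\mu_3)$, as $\mu_3$ is a probability measure), using $\langle 1,e_\lambda\rangle=\widehat{\mu_3}(-\lambda)$ and the scaling identity with $c=-a$, would give
$$\sum_{n\in F}\bigl|\widehat{\mu_3}(-3^na)\bigr|^2=\sum_{n\in F}\bigl|\widehat{\mu_3}(-a)\bigr|^2=\#F\cdot|\widehat{\mu_3}(-a)|^2=\infty,$$
contradicting $M$, since $F$ is infinite and $-a\in\bz\setminus\{0\}$. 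For the dimension, note $\Lambda\subseteq\Lambda_0:=\{3^na:n\ge0\}$ and $\cD_r^+$ is monotone in the set. Assuming $a>0$ (which does not change the counts), if $3^{n_1}a<\cdots<3^{n_s}a$ are the elements of $\Lambda_0$ in an interval of length $2h$, then $2h\ge 3^{n_s}a-3^{n_1}a\ge a(3^{s-1}-1)\ge 3^{s-1}-1$, so $s\le 1+\log_3(2h+1)$, uniformly in the location of the interval. Hence for every $r>0$,
$$\cD_r^+(\Lambda)\le\cD_r^+(\Lambda_0)\le\limsup_{h\to\infty}\frac{1+\log_3(2h+1)}{h^r}=0<\infty,$$
so $\dim^+(\Lambda)=\inf\{r>0:\cD_r^+(\Lambda)<\infty\}=0$.

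\emph{Where the work is.} The only real content is the scaling identity $\widehat{\mu_3}(3^nc)=\widehat{\mu_3}(c)$ and the non-vanishing $\widehat{\mu_3}(c)\neq0$ for $c\in\bz\setminus\{0\}$; once these are in hand everything is immediate, because sampling the constant function $1$ against $\{e_{3^na}:n\in F\}$ yields a single nonzero value repeated infinitely often. I do not anticipate a genuine obstacle—only routine care in justifying the infinite-product convergence and in using the non-compactly-supported test function $e^{2\pi itx}$ in \eqref{eq1_4}.
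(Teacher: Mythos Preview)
Your proposal is correct and matches the paper's own proof essentially line for line: both derive the refinement identity $\widehat{\mu_3}(3^n c)=\widehat{\mu_3}(c)$ for integers $c$ from the factor $\tfrac12(1+e^{4\pi ic})=1$, verify $\widehat{\mu_3}(a)\neq 0$ by locating the zeros of the infinite product (your parity argument $4c=3^k(2m+1)$ is exactly the observation that the zeros $3^k(2m+1)/4$ are never integers), test the Bessel inequality on the constant function $1$, and bound the count in an interval of length $2h$ by roughly $\log_3(2h)$ for the dimension claim. Your write-up is in fact slightly more careful about signs and about the convergence of the infinite product than the paper's.
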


\begin{proof}

First we prove that $\widehat\mu_3(a)\neq 0$. For this, note that the Fourier transform of the invariance equation \eqref{eq1_4} implies
\begin{equation}\label{eqex2_1}
\widehat\mu_3(x)=m_B(\Rs^{-1}x)\widehat\mu_3(\Rs^{-1}x),\quad(x\in\br)
\end{equation}
where
$$m_B(x)=\frac{1}{2}(1+e^{2\pi i2x}).$$
Iterating this relation and taking the limit at infinity we get
$$\widehat\mu_3(x)=\prod_{n=1}^\infty m_B(\Rs^{-n}x),\quad(x\in\br)$$
and the infinite product is uniformly convergent on compact sets. See also \cite{DJ07d,DJ06} for more details.

Then the zeros of $\widehat\mu_3$ are of the form $3^n\frac{(2k+1)\pi}{4}$, $k\in\bz$, $n\geq1$. Thus $\widehat\mu_3(a)\neq0$.

Also, since $m_B(3^ka)=1$ if $a\in\bz$ and $k\geq 0$, the refinement equation \eqref{eqex2_1} implies that $\widehat\mu_3(3^na)=\widehat\mu_3(a)\neq0$ for
all $n\geq 1$.

But then, $\{3^na : n\in F\}$ cannot be a Bessel spectrum if $F$ is infinite, because one has
$$\sum_{n\in F}|\ip{1}{e_{3^na}}|^2=\sum_{n\in F}|\widehat\mu_3(3^na)|^2=\# F\cdot |\widehat\mu_3(a)|^2=\infty.$$

To show that the Beurling dimension is zero, we see that the Beurling dimension of $\{3^na : n\in F\}$ is the same as the one for
$\{3^n : n\in F\}$ and is less than that of $\Lambda:=\{3^n :  n\geq0\}$. Then $\#\Lambda\cap(x+h(-1,1))$ is less than the biggest $p$ such
that $3^n,\dots, 3^{n+p} \in (x-h,x+h)$, for some $n\geq 0$. Then $|3^{n+p}-3^n|\leq 2h$ so $3^p\leq (2h+1)$, which means that $p\leq
\log_3(2h+1)$. But then, for $\alpha>0$ the $\alpha$-Beurling density is less than $\liminf_h  \log_3(2h+1)/h^\alpha=0$. Since $\alpha>0$ was
arbitrary, this shows that the Beurling dimension is zero.
\end{proof}

\end{example}

\end{document}